\newcounter{theoremcounter}
\newcounter{lemmacounter}
\newcounter{dummycounter}
\newcounter{corcounter}
\newcounter{emptycounter}
\newtheorem{theorem}[theoremcounter]{Theorem}
\newtheorem{lemma}[lemmacounter]{Lemma}
\newtheorem{corollary}[corcounter]{Corollary}
\numberwithin{equation}{section}
\numberwithin{lemmacounter}{section}
\numberwithin{propcounter}{section}
\numberwithin{corcounter}{section}
\numberwithin{conjcounter}{section}
\numberwithin{theoremcounter}{section}
\numberwithin{probcounter}{section}
\newcounter{eqncounter}
\numberwithin{equation}{eqncounter}
\def\IR{\mathbf R}
\def\IZ{\mathbf Z}
\def\IN{\mathbf N}
\def\vNull{{\mbox{\boldmath $0$}}}
\def\vx{{\bf x}}
\def\vy{{\bf y}}
\def\Da{n}
\def\V{\textup{V}}
\def\Vol{\textup{Vol}}
\def\gi{g_i}
\def\Gt{G_\theta}
\def\Gi{G_i}
\def\GtGmN{\varphi_{-N+1}}
\def\GtGN{\varphi_N}
\def\GtGi{\varphi_i}
\def\ISone{\tau_1}
\def\IStwo{\tau_2}
\def\ISthree{\tau_3}
\def\ISfour{\tau_4}
\def\ISj{\tau_j}
\def\Lambdaj{\Lambda_j}
\def\F{\varepsilon}
\def\CL{C_L}
\def\M{M}
\def\dia{d}
\def\Cim{C_5}
\def\Cone{C_1}
\def\Ctwo{C_3}
\def\Cthree{C_4}
\def\Cfour{C_2}
\def\Rone{H_1}
\def\Rtwo{H_2}
\def\Rthree{H_3}
\def\Rfour{H_4}
\def\Rj{H_j}
\def\Zone{Z_1}
\def\Ztwo{Z_2}
\def\Zthree{Z_3}
\def\Zfour{Z_4}
\def\Zj{Z_j}
\def\Eone{R_1}
\def\Etwo{R_2}
\def\Ej{R_j}
\def\A{\Delta_x}
\def\B{\Delta_y}
\def\Dn{D_n}
\def\D3{D_3}
\def\logplus{\log^{+}}
\def\lphii{\iota_i}
\def\lphi1{\iota_1}
\def\lphiM{\iota_M}
\begin{document}
\title{Asymptotic Diophantine approximation: The multiplicative case}

\author{Martin Widmer}

\address{Department of Mathematics\\ 
Royal Holloway, University of London\\ 
TW20 0EX Egham\\ 
UK}

\email{martin.widmer@rhul.ac.uk}

\date{\today}

\subjclass[2010]{Primary 11J13; 11J25; 11J54 Secondary 11H46; 37A17}

\keywords{Diophantine approximation, reciprocals of fractional parts, flows, counting, Littlewood's conjecture}

\begin{abstract}
Let $\alpha$ and $\beta$ be irrational real numbers and $0<\F<1/30$. 
We prove a precise estimate for the number of positive integers $q\leq Q$ 
that satisfy $\|q\alpha\|\cdot\|q\beta\|<\F$.
If we choose $\F$ as a function of $Q$ we get asymptotics as $Q$ gets large,
provided $\F Q$ grows quickly enough in terms of the (multiplicative) 
Diophantine type of $(\alpha,\beta)$, e.g., if  $(\alpha,\beta)$ is a 
counterexample to Littlewood's conjecture then we only need that 
$\F Q$ tends to infinity. 
Our result yields a new upper bound on sums of reciprocals of products 
of fractional parts, and sheds some light on a recent question of L\^{e} 
and Vaaler.  
\end{abstract}

\begin{comment}
Abstract:
%Let $\alpha$ and $\beta$ be real numbers and suppose $q\|q\alpha\|\|q\beta\|\geq \phi(q)$
%for a positive monotone non-increasing function $\phi$.
We prove lower and upper bounds for the number of positive integers $q\leq Q$ 
that satisfy $\|q\alpha\|\cdot\|q\beta\|<\F$. If $\F=\F(Q)$ is chosen as a function of $Q$ and 
$\F(Q)Q$ grows quickly enough in terms of the (multiplicative)
Diophantine type of $(\alpha,\beta)$ then we get asymptpotics as $Q$ gets large. Our estimate yields a
new upper bound for the sum of reciprocals of products of fractional parts.\\
\end{comment}

\maketitle

\section{Introduction}\label{intro}
Let $\alpha$ and $\beta$ be irrational real numbers, and let $\|\cdot\|$ be the distance to the nearest integer. Littlewood's conjecture asserts that 
$$\liminf_{q\rightarrow \infty} q\cdot\|q\alpha\|\cdot\|q\beta\|=0.$$
We assume that $\phi:[1,\infty)\rightarrow (0,1/4]$ is a a non-increasing\footnote{By non-increasing we mean that $x,y \in [1,\infty)$ and $x\leq y$ implies that $\phi(x)\geq \phi(y)$.} function (depending on $(\alpha,\beta)$) such that 
\begin{alignat}1\label{multapproxcond}
q\cdot\|q\alpha\|\cdot\|q\beta\|\geq \phi(q)
\end{alignat}
for all positive integers  $q$. Note that $\phi$ can be chosen to be constant
if and only if the pair $(\alpha,\beta)$ is a counterexample to Littlewood's conjecture.
The condition (\ref{multapproxcond}) has been considered in various forms, e.g., by
Badziahin \cite{Badziahin2013}. He takes a function $f:\IN\rightarrow (0,\infty)$ and considers the set
\begin{alignat}1\label{Madf}
\text{\bf Mad}(f)=\biggl\{(\alpha,\beta)\in \IR^2; \liminf_{q\rightarrow \infty}f(q)\cdot q\cdot\|q\alpha\|\cdot\|q\beta\|>0\biggr\}.
\end{alignat}
Special cases of these sets
already appeared in \cite{BadziahinVelani2011}. If we assume that $1/f$ is also non-increasing then
$(\alpha,\beta)$ lies in $\text{\bf Mad}(f)$ if and only if (\ref{multapproxcond}) holds true
with a $\phi$ satisfying $1/f(q)\ll_{\alpha,\beta}\phi(q)\ll_{\alpha,\beta}1/f(q)$. \\

Throughout this article, let $Q\geq 1$,
$\F>0$, and $T>0$ be real numbers, and assume 
\begin{alignat}1\label{FTe}
\F/T^2\leq 1/e^2,
\end{alignat}
where $e$ denotes the base of the (natural) logarithm.
We consider the finite set
\begin{alignat*}5
M_{\alpha,\beta}(\F,T,Q)=\biggl\{(p_1,p_2,q)\in \IZ^3; &|p_1+q\alpha| \cdot|p_2+q\beta|<\F,\\
& \max\{|p_1+q\alpha|,|p_2+q\beta|\}\leq T,\\
&0<q\leq Q \biggr\}.
\end{alignat*}
\begin{theorem}\label{thmasympvol}
Suppose that (\ref{multapproxcond}) and (\ref{FTe}) hold, and set $\Cone=3^{28}$. Then we have
\begin{alignat*}3
\left||M_{\alpha,\beta}(\F,T,Q)|-4\F Q\left(\log\left(\frac{T^2}{\F}\right)+1\right)\right|
\leq \Cone(1+2T)^2\log\left(\frac{T^2}{\F}\right)\left(\frac{\F Q}{\phi(Q)}\right)^{2/3}.
\end{alignat*}
\end{theorem}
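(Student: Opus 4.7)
My plan is to reinterpret the problem as counting lattice points in $\IR^3$. Let
\[\Lambda = \{(p_1+q\alpha,\,p_2+q\beta,\,q) : (p_1,p_2,q)\in\IZ^3\}\subset\IR^3;\]
its generating matrix is upper-triangular with unit diagonal, so $\Lambda$ is a unimodular lattice and the displayed map is a bijection $\IZ^3\to\Lambda$. Consequently,
\[|M_{\alpha,\beta}(\F,T,Q)| = |\Lambda\cap B|, \qquad B := \{(x,y,z)\in\IR^3 : |xy|<\F,\ \max\{|x|,|y|\}\leq T,\ 0<z\leq Q\}.\]
Integrating the cross-section (splitting according to whether $|x|\leq\F/T$ or $|x|>\F/T$) yields exactly the main term $\Vol(B) = 4\F Q(\log(T^2/\F)+1)$, so the task reduces to bounding the discrepancy $||\Lambda\cap B|-\Vol(B)|$.

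For this I would decompose the two-dimensional cross-section $R=\{|xy|<\F,\ \max\leq T\}$ dyadically: the central rectangle $\{|x|\leq\F/T,\ |y|\leq T\}$ together with about $\log_2(T^2/\F)$ hyperbolic slabs $R_k^{\pm}=\{\pm x\in[2^{-k-1}T,\,2^{-k}T],\ |y|\leq\F/|x|\}$, each sandwiched between axis-aligned rectangles of area comparable to $\F$. Crossing with $(0,Q]$ produces $O(\log(T^2/\F))$ axis-aligned boxes $K\subset\IR^3$ of volume comparable to $\F Q$ that cover $B$ (with harmless overlaps on the dyadic boundaries). For each such $K$ a counting lemma in the style of Widmer's earlier work bounds $||\Lambda\cap K|-\Vol(K)|$ in terms of the successive minima $\lambda_1\leq\lambda_2\leq\lambda_3$ of $\Lambda$ with respect to $K$ together with boundary contributions controlled by the side-lengths of $K$.

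The hypothesis (\ref{multapproxcond}) enters precisely to lower-bound $\lambda_1$. Writing $K$ as a box of dimensions $2A\times 2B\times Q$ with $4AB\asymp\F$ and $A,B\leq T$, any nonzero $(p_1+q\alpha,p_2+q\beta,q)\in\lambda K$ with $q\neq 0$ and $\lambda\leq 1$ satisfies
\[4\lambda^2 AB\ \geq\ |p_1+q\alpha|\,|p_2+q\beta|\ \geq\ \|q\alpha\|\,\|q\beta\|\ \geq\ \phi(|q|)/|q|\ \geq\ \phi(Q)/(\lambda Q),\]
so $\lambda^3\gtrsim\phi(Q)/(\F Q)$ and $\lambda_1\gtrsim(\phi(Q)/(\F Q))^{1/3}$; the case $q=0$ gives the competing bound $\lambda\geq\min(1/A,1/B)\geq 1/T$, which is incorporated separately. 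The cube-root exponent here is the source of the $2/3$ in the stated error: substituting $\lambda_1\gtrsim(\phi(Q)/(\F Q))^{1/3}$ into the counting-lemma estimate (and invoking Minkowski's second theorem to tie $\lambda_1\lambda_2\lambda_3$ to $\Vol(K)$ where necessary) collapses the per-box error to $\lesssim(1+2T)^2(\F Q/\phi(Q))^{2/3}$; the $(1+2T)^2$ factor reflects the transverse $(x,y)$-boundary of the boxes, whose extents are bounded by $T$ rather than $Q$.

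Summing the per-box bounds over the $O(\log(T^2/\F))$ dyadic pieces reproduces the stated estimate, the $\log(T^2/\F)$ prefactor being just the piece count. The main obstacle is quantitative rather than conceptual: pinning down the explicit constant $\Cone=3^{28}$ requires careful accounting of constants arising in the dyadic covering, the counting lemma, and the successive-minima argument; and obtaining the precise $(1+2T)^2$ dependence on $T$ — rather than some cruder polynomial in $T$ or an unwanted factor of $Q$ — relies on the fact that the bound $\max\{|x|,|y|\}\leq T$ restricts the boundary in the transverse $(x,y)$-directions while the ``long'' direction of each box is exactly $Q$.
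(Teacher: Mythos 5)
Your proposal follows essentially the same route as the paper. The paper also reduces to counting $|\Lambda\cap Z|$ for the unimodular lattice $\Lambda$, slices the hyperbolic cross-section into $O(\log(T^2/\F))$ pieces (by the \emph{ratio} $y/x$ in a geometric progression with base $\nu\in[1/e^2,1/e]$, rather than your dyadic slicing in $|x|$ --- but the two are interchangeable, and the slope slicing is merely more convenient because a single diagonal flow $g_i$ carries each $S_i$ exactly onto $S_0$), renormalizes each piece by a unimodular diagonal map so that it lands in a cube of side $\asymp(\F Q)^{1/3}$, applies a Lipschitz-boundary lattice-point counting theorem, and lower-bounds the renormalized first minimum by $\min\{1,1/(2T)\}\phi(Q)^{1/3}$ via precisely the AM--GM plus (\ref{multapproxcond}) argument you describe, with $q=0$ contributing the $T$-dependence and hence the $(1+2T)^2$ factor. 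One point to tighten: you should apply the counting lemma to the true slabs (whose curved boundary piece the Lipschitz hypothesis handles after renormalization), not to enclosing axis-aligned boxes as the phrasing ``boxes $K$ that cover $B$'' suggests. The cumulative volume gap between the slabs and their enclosing boxes is $\asymp\F Q\log(T^2/\F)$, of the same order as the main term, so counting in the boxes would destroy the asymptotic; the paper sidesteps this by explicitly parameterizing the hyperbolic boundary arc by a Lipschitz map in Lemma \ref{Lip}.
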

We shall see that the main term is just the volume of the set $Z$ defined in Section \ref{prereq}.
The constant $\Cone$ could easily be improved.
Choosing $T=1/2$ we have 
\begin{alignat*}3
|M_{\alpha,\beta}(\F,1/2,Q)|=|\{q\in \IZ; \|q\alpha\| \cdot\|q\beta\|<\F, 0<q\leq Q\}|
\end{alignat*}
which is of particular interest,
and hence we state this case of Theorem \ref{thmasympvol} as a corollary.
\begin{corollary}\label{corasympvol}
Suppose that (\ref{multapproxcond}) holds, that $0<\F\leq 1/(2e)^2$, and set $\Cfour=4\Cone=4\cdot3^{28}$. Then we have 
\begin{alignat*}3
\left||M_{\alpha,\beta}(\F,1/2,Q)|-4\F Q\left(1-\log(4\F)\right)\right|
\leq -\Cfour\log(\F)\left(\frac{\F Q}{\phi(Q)}\right)^{2/3}.
\end{alignat*}
\end{corollary}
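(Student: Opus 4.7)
The corollary is an algebraic specialisation of Theorem \ref{thmasympvol} to the choice $T=1/2$, combined with the identification of $M_{\alpha,\beta}(\F,1/2,Q)$ with the set of relevant $q$'s already asserted just before the corollary statement. The plan therefore has three short, mechanical steps and no substantive obstacle.

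First, I would verify that the hypothesis (\ref{FTe}) is automatic for $T=1/2$. The inequality $\F/T^2 \leq 1/e^2$ reduces to $4\F \leq 1/e^2$, which is exactly the assumption $\F \leq 1/(2e)^2$ of the corollary.

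Next, I would rewrite the main term of Theorem \ref{thmasympvol}. With $T=1/2$ we have $\log(T^2/\F) = \log(1/(4\F)) = -\log(4\F)$, so $4\F Q(\log(T^2/\F)+1)$ becomes $4\F Q(1 - \log(4\F))$, which matches the main term in the corollary exactly.

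Finally, I would handle the error term. The prefactor $\Cone(1+2T)^2$ specialises to $4\Cone = \Cfour$. To turn the $\log(T^2/\F) = \log(1/(4\F))$ appearing in the theorem into the $-\log \F$ appearing in the corollary, I would note that $\F \leq 1/(2e)^2 < 1/4$ ensures both quantities are positive, and $\log(1/(4\F)) = -\log 4 - \log \F \leq -\log \F$. Inserting this upper bound gives the error bound $-\Cfour \log(\F)(\F Q/\phi(Q))^{2/3}$ stated in the corollary. The mildest subtlety to keep in mind is simply that the inequality $\log(1/(4\F)) \leq -\log \F$ is used in the direction consistent with bounding the error from above, which is why the sign/positivity check on $-\log\F$ is worth stating explicitly.
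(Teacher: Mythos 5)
Your proposal is correct and is exactly the specialisation the paper intends: the paper states Corollary \ref{corasympvol} as the case $T=1/2$ of Theorem \ref{thmasympvol} without written-out algebra, and your three steps (checking (\ref{FTe}), rewriting the main term, and bounding $\log(1/(4\F))\leq -\log\F$ in the error) are precisely the routine verifications being elided.
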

If we choose a value $\F=\F(Q)\leq 1/(2e)^2$ for each value of $Q$, and we let $Q$ tend to infinity then we get 
asymptotics for $|M_{\alpha,\beta}(\F,1/2,Q)|$ provided $1/\phi(Q)=o(\sqrt{\F(Q)Q})$.
Let us write $\logplus Q=\max\{1,\log Q\}$.
A result of Gallagher \cite{Gallagher1962} implies that for $f(q)=(\logplus q)^{\lambda}$ the set $\text{\bf Mad}(f)$
has full Lebesgue measure if $\lambda>2$ and measure zero when $\lambda\leq 2$.
Hence, if $\F(Q)\gg (\logplus Q)^{2\lambda}/Q$ with $\lambda>2$ then the asymptotics 
are given by the main term in Corollary \ref{corasympvol} for almost\footnote{With respect to the Lebesgue measure.} every pair $(\alpha,\beta)\in \IR^2$. 
Bugeaud and Moshchevitin \cite{BugeaudMoshchevitin2011} showed that when $\lambda=2$ the set $\text{\bf Mad}(f)$ still has
full Hausdorff dimension. This was substantially improved by Badziahin \cite{Badziahin2013}
who showed that even with $f(q)=(\logplus q)(\logplus(\logplus q))$ the set $\text{\bf Mad}(f)$ has full Hausdorff dimension.\\

We now discuss an application of Corollary \ref{corasympvol}. In \cite{LeVaaler2015} L\^e and Vaaler showed that
\begin{alignat*}3
Q(\logplus Q)^2\ll \sum_{q=1}^{\lfloor Q\rfloor}(\|q\alpha\|\cdot\|q\beta\|)^{-1}.
\end{alignat*}
Motivated by this they raised the question whether there exist real irrational numbers $\alpha, \beta$ such that
\begin{alignat}3\label{LeVaalerub}
\sum_{q=1}^{\lfloor Q\rfloor}(\|q\alpha\|\cdot\|q\beta\|)^{-1}\ll_{\alpha,\beta} Q(\logplus Q)^2.
\end{alignat}
L\^e and Vaaler showed that (\ref{LeVaalerub}) holds for $(\alpha,\beta)$ provided the latter is a counterexample to Littlewood's conjecture,
i.e., provided one can choose $\phi$ from (\ref{multapproxcond}) to be a constant function. We show that $\phi(Q)\gg_{\alpha,\beta} 1/(\logplus Q)$ suffices.
\begin{corollary}\label{sum}
Suppose that (\ref{multapproxcond}) holds,  and set $\Ctwo=12$ and $\Cthree=3^{32}$. Then we have
\begin{alignat*}3
\sum_{q=1}^{\lfloor Q\rfloor}(\|q\alpha\|\cdot\|q\beta\|)^{-1}\leq \Ctwo Q\left(\log\left(\frac{Q}{\phi(Q)}\right)\right)^2+\Cthree\frac{Q}{\phi(Q)}\log\left(\frac{Q}{\phi(Q)}\right).
\end{alignat*}
\end{corollary}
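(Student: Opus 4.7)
The plan is to express the sum as an integral of the counting function $N(\F):=|M_{\alpha,\beta}(\F,1/2,Q)|$ against $d\F/\F^2$, and then insert Corollary \ref{corasympvol}. Writing $u_q:=\|q\alpha\|\cdot\|q\beta\|$, the elementary identity $1/u_q=\int_{u_q}^{\infty}\F^{-2}\,d\F$ combined with Fubini gives
$$\sum_{q=1}^{\lfloor Q\rfloor}\frac{1}{u_q}=\int_0^{\infty}\frac{N(\F)}{\F^2}\,d\F.$$

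I would split this integral at two natural thresholds, $\F_1:=\phi(Q)/Q$ and $\F_2:=1/(4e^2)$. Because $\phi$ is non-increasing, the hypothesis (\ref{multapproxcond}) gives $u_q\geq \phi(q)/q\geq \phi(Q)/Q$ for every $q\leq Q$, so $N(\F)=0$ on $[0,\F_1]$. At the other end, $u_q\leq 1/4$, hence $N(\F)=\lfloor Q\rfloor$ on $[1/4,\infty)$, contributing exactly $4Q$. On the intermediate piece $[\F_2,1/4]$, the trivial bound $N(\F)\leq Q$ yields a contribution of at most $4e^2Q$. Both tails are of size $O(Q)$ and will be absorbed into the $\Cthree Q\log(Q/\phi(Q))/\phi(Q)$ term advertised in the statement.

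On the bulk range $[\F_1,\F_2]$, Corollary \ref{corasympvol} applies and gives
$$N(\F)\leq 4\F Q(1-\log(4\F))+\Cfour(-\log\F)\left(\frac{\F Q}{\phi(Q)}\right)^{2/3}.$$
Dividing by $\F^2$ splits the contribution into two elementary integrals. For the main piece, substituting $u=\log(4\F)$ reveals that $\F^{-1}(1-\log(4\F))$ integrates to $u-u^2/2$; evaluating at $\F_2$ (where $u=-2$) and $\F_1$ (where $u=\log 4-L$ with $L:=\log(Q/\phi(Q))$) produces a leading term $2QL^2$ together with $O(QL)$ corrections that I would absorb into the final bound $\Ctwo QL^2$. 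For the error piece, an antiderivative of $(-\log\F)\F^{-4/3}$ is $3\F^{-1/3}(\log\F-3)$; evaluating from $\F_1$ to $\F_2$ gives a contribution of order $(Q/\phi(Q))^{1/3}L$, which after multiplication by $\Cfour(Q/\phi(Q))^{2/3}$ sits inside $\Cthree (Q/\phi(Q))L$.

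The main obstacle is bookkeeping rather than any conceptual difficulty: one has to verify that the explicit numerical values $\Ctwo=12$ and $\Cthree=3^{32}$ really do suffice after absorbing the $O(Q)$ tails, the sub-leading $QL$ corrections coming from the main-term integral, and the various $\log 4$ and $\log(e^2)$ constants that crop up when evaluating antiderivatives at the endpoints. Conceptually, the proof is a one-line layer-cake identity followed by an application of Corollary \ref{corasympvol} and two elementary integrations, so the only genuine work is the constant-chasing.
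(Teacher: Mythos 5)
Your layer-cake identity
$$\sum_{q=1}^{\lfloor Q\rfloor}\frac{1}{u_q}=\int_0^{\infty}\frac{N(\F)}{\F^2}\,d\F$$
is the continuous analogue of what the paper does with a dyadic decomposition: the paper writes $\sum_q 1/u_q\le\sum_{k\ge1}2^{k+1}\,|M_{\alpha,\beta}(2^{-k},1/2,Q)|$, observes (as you do) that $M_{\alpha,\beta}(\F,1/2,Q)=\emptyset$ for $\F<\phi(Q)/Q$ so the sum truncates at $k\le\log_2(Q/\phi(Q))$, trivially bounds the first few terms ($k\le 4$, i.e.\ $\F$ near $1/4$, just as your intermediate slice $[\F_2,1/4]$), and inserts Corollary \ref{corasympvol} for $k\ge 5$ before summing geometric and arithmetico-geometric series. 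So the two proofs are the same argument, one executed with an integral, the other with a dyadic sum. The dyadic version sidesteps the antiderivative computations and makes the constant-chasing somewhat more transparent; your integral version keeps things closed-form.

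Two small things to repair. First, your antiderivative of $(-\log\F)\F^{-4/3}$ has a sign error: integrating by parts gives $3\F^{-1/3}(\log\F+3)$, not $3\F^{-1/3}(\log\F-3)$ (differentiate your candidate and you pick up a stray $+6\F^{-4/3}$). This does not change the order of magnitude—the dominant endpoint contribution is still $\asymp(Q/\phi(Q))^{1/3}L$—but the constant bookkeeping you defer would go wrong if carried out with the stated formula. Second, your bulk integral over $[\F_1,\F_2]$ implicitly assumes $\F_1<\F_2$, i.e.\ $\phi(Q)/Q<1/(4e^2)$. When this fails one has $Q\le e^2$, and the whole sum is then bounded trivially by $\lfloor Q\rfloor\cdot Q/\phi(Q)\le e^2\cdot Q/\phi(Q)$, which is absorbed by $\Cthree(Q/\phi(Q))\log(Q/\phi(Q))$ since $\log(Q/\phi(Q))\ge\log 4$; you should state this degenerate case explicitly rather than let the integral be empty or reversed.
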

Einsiedler, Katok and Lindenstrauss \cite{EinsiedlerKatokLindenstrauss} showed that the set of counterexamples to Littlewood's conjecture has Hausdorff dimension
zero, and it is widely believed that no such counterexample exists at all. On the other hand there is evidence for the 
existence of pairs $(\alpha,\beta)$ with $\phi(Q)\gg_{\alpha,\beta} 1/(\logplus Q)$. In fact, Badziahin and Velani \cite[(L2)]{BadziahinVelani2011} (see also \cite[Conjecture 1]{Badziahin2013})
conjectured that the set of these pairs has full Hausdorff dimension. Unfortunately, it is not known whether such a pair $(\alpha,\beta)$ really
exists and so we cannot unconditionally answer L\^e and Vaaler's question.

\begin{comment}
Recall that $\alpha$ is called badly approximable if there exists $C>0$ such that $q\|q\alpha\|>C$ for all positive integers $q$.
For $q>0$ and badly approximable $\alpha$ and $\beta$ 
we have  
$$\min\{|p_1+q\alpha|,|p_2+q\beta|\}>C_3/q,$$ 
and hence, $0<q<Q$ and
$|p_1+q\alpha| |p_2+q\beta|<{\psi(Q)}/{Q}$ implies $\max\{|p_1+q\alpha|,|p_2+q\beta|\}\leq \psi(Q)/C_3$. Thus, for
$T\geq \psi(Q)/C_3$ the second inequality in the definition of $M_{\alpha,\beta}(\psi,T,Q)$ is void. 
Applying Theorem \ref{thmasympvol} with $T=\psi(Q)/C_3$  and $Q_0=(eC_3)^2$
we obtain a third corollary. 
\begin{corollary}\label{corasympvol2}
Suppose $\alpha$ and $\beta$ are both badly approximable and such that (\ref{multapproxcond}) holds and $Q\geq(eC_3)^2$. Then, 
\begin{alignat*}3
\left||\{(p_1,p_2,q)\in \IZ^3; |p_1+q\alpha| |p_2+q\beta|<\frac{\psi(Q)}{Q}, 0<q<Q\}|-4\psi(Q)\log Q\right|\\
\leq (3C_0/C_3)\left(\log Q\left(\frac{\psi(Q)}{\phi(Q)}\right)^{2/3}+\psi(Q)\left(\frac{\psi(Q)}{\phi(Q)}\right)^{1/3}\right).
\end{alignat*}
\end{corollary}
\end{comment}

\section{Prerequisites}\label{prereq}

%We start with the simple observation that if $\F Q<\phi(Q)$ then Theorem \ref{thmasympvol} holds true.
\begin{lemma}\label{FQsmall}
If $\F Q<\phi(Q)$ then the stated inequality in Theorem \ref{thmasympvol} holds true.
\end{lemma}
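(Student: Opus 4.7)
The plan has two parts: first, to show that under the hypothesis $\F Q<\phi(Q)$ the set $M_{\alpha,\beta}(\F,T,Q)$ is in fact empty; and second, that the main term $4\F Q(\log(T^2/\F)+1)$ is itself dominated by the right-hand side of the asserted inequality, so that the bound holds trivially.

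For the emptiness, I would fix any integer $q$ with $0<q\le Q$ and combine (\ref{multapproxcond}) with the fact that $\phi$ is non-increasing to get $\|q\alpha\|\cdot\|q\beta\|\ge \phi(q)/q\ge \phi(Q)/Q>\F$. Since $|p_1+q\alpha|\ge\|q\alpha\|$ and $|p_2+q\beta|\ge\|q\beta\|$ for all integers $p_1,p_2$, the defining condition $|p_1+q\alpha|\cdot|p_2+q\beta|<\F$ is impossible. Hence $|M_{\alpha,\beta}(\F,T,Q)|=0$, and the left-hand side of the stated inequality equals $4\F Q(\log(T^2/\F)+1)$.

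For the second part, the hypothesis (\ref{FTe}) gives $\log(T^2/\F)\ge 2$, so $\log(T^2/\F)+1\le 2\log(T^2/\F)$. Thus it suffices to show
\[
8\F Q \;\le\; \Cone (1+2T)^2 \Bigl(\frac{\F Q}{\phi(Q)}\Bigr)^{2/3}.
\]
Bounding $(1+2T)^2\ge 1$ and rearranging, this reduces to $8(\F Q)^{1/3}\phi(Q)^{2/3}\le \Cone$. Using $\F Q<\phi(Q)$ I estimate $(\F Q)^{1/3}<\phi(Q)^{1/3}$, so the left side is strictly less than $8\phi(Q)\le 8\cdot(1/4)=2$, which is comfortably below $\Cone=3^{28}$.

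I do not expect any real obstacle: the Diophantine condition (\ref{multapproxcond}) forces the counting set to be empty once $\F Q$ drops below $\phi(Q)$, and what remains is a short numerical check, with the generous value of $\Cone$ giving all the slack needed. The purpose of the lemma is purely to let the main proof of Theorem \ref{thmasympvol} assume $\F Q\ge \phi(Q)$ without loss of generality.
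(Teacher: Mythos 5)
Your proof is correct and takes essentially the same route as the paper: the set is shown to be empty via the monotonicity of $\phi$ and the bound $\|q\alpha\|\cdot\|q\beta\|\geq\phi(Q)/Q>\F$, and the main term is absorbed into the error term using $\log(T^2/\F)\geq 2$ together with $\phi(Q)\leq 1/4$, with the generous constant $\Cone$ providing ample slack.
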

\begin{proof}
Suppose $(p_1,p_2,q)\in M_{\alpha,\beta}(\F,T,Q)$. Hence, $1\leq q\leq Q$ and 
$\|q\alpha\|\cdot\|q\beta\|\leq|p_1+q\alpha|\cdot |p_2+q\beta|<\F$. On the other  hand by (\ref{multapproxcond}), 
and using the monotonicity of $\phi$, we have
$\|q\alpha\|\cdot\|q\beta\|\geq \phi(q)/q\geq \phi(Q)/Q$. Thus, if $\F Q<\phi(Q)$ then $|M_{\alpha,\beta}(\F,T,Q)|=0$.
It remains to show that the main term is covered by the error term. 
As $T^2/\F\geq e^2$ we have $\log(T^2/\F)+1<2\log(T^2/\F)$.
Using that $\F Q< \phi(Q)\leq 1/4$ we see that $4\F Q<(\Cone/2)(\F Q/\phi(Q))^{2/3}$.
This shows that the main term is bounded by the error term, and this proves the lemma.
\end{proof}
For the proof of Theorem  \ref{thmasympvol}  we thus can and will assume that
\begin{alignat}3\label{FQphi}
\F Q\geq \phi(Q).
\end{alignat}

For a vector $\vx$ in $\IR^\Da$ we write $|\vx|$ 
for the Euclidean length of $\vx$. 
Let $\Lambda$ be a lattice of rank $\Da$ in $\IR^\Da$. We define the first successive minimum 
$\lambda_1(\Lambda)$ of $\Lambda$ as the shortest Euclidean length of a non-zero lattice vector
\begin{alignat*}3
\lambda_1=\inf \{|\vx|; \vx\in \Lambda, \vx\neq \vNull\}.
\end{alignat*}
From now on suppose $\Da\geq 2$, $M\geq 1$ is also an integer, and let $L$ be a non-negative real.
We say that a set $S$ is in Lip$(\Da,M,L)$ if 
$S$ is a subset of $\IR^\Da$, and 
if there are $M$ maps 
$\lphi1,\ldots,\lphiM:[0,1]^{\Da-1}\longrightarrow \IR^\Da$
satisfying a Lipschitz condition
\begin{alignat*}3
|\lphii(\vx)-\lphii(\vy)|\leq L|\vx-\vy| \text{ for } \vx,\vy \in [0,1]^{\Da-1}, i=1,\ldots,M 
\end{alignat*}
such that $S$ is covered by the images
of the maps $\lphii$.\\

We will apply the following counting result which is an immediate consequence of \cite[Theorem 5.4]{art1}.
\begin{lemma}\label{MV_CL}
Let $\Lambda$ be a lattice of rank $\Da$ in $\IR^\Da$
with first successive minimum $\lambda_1$.
Let $S$ be a set in $\IR^\Da$ such that
the boundary $\partial S$ of $S$ is in Lip$(\Da,M,L)$.
Then $S$ is measurable, and moreover,
\begin{alignat*}3
\left||\Lambda\cap S|-\frac{\Vol S}{\det \Lambda}\right|
\leq \Dn \M\left(1+\left(\frac{L}{\lambda_1}\right)^{\Da-1}\right),
\end{alignat*}
where $\Dn=\Da^{2\Da^2}$.
\end{lemma}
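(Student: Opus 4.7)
The plan is to derive this lemma directly from \cite[Theorem 5.4]{art1}, which is a more general counting estimate involving \emph{all} the successive minima $\lambda_1\leq\lambda_2\leq\cdots\leq\lambda_\Da$ of $\Lambda$. In its usual form that theorem bounds the discrepancy $\bigl||\Lambda\cap S|-\Vol S/\det\Lambda\bigr|$ by a quantity of the shape
\[
c(\Da)\,\M\,\max_{0\leq i\leq \Da-1}\frac{L^i}{\lambda_1\cdots\lambda_i},
\]
where the empty product (for $i=0$) is interpreted as $1$, together with the assertion that $S$ is measurable under the Lipschitz covering hypothesis on $\partial S$.

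To collapse this into the cleaner form stated here, I would invoke the monotonicity $\lambda_j\geq\lambda_1$ for every $j$ to bound each product $\lambda_1\cdots\lambda_i$ from below by $\lambda_1^i$, and hence each summand in the maximum by $(L/\lambda_1)^i$. The maximum of $(L/\lambda_1)^i$ over $0\leq i\leq \Da-1$ is attained at $i=0$ when $L\leq\lambda_1$ and at $i=\Da-1$ otherwise, so in either case it is majorised by $1+(L/\lambda_1)^{\Da-1}$. Finally one absorbs the numerical factor $c(\Da)$ into $\Dn=\Da^{2\Da^2}$, which is comfortably larger than any constant produced by the proof of \cite[Theorem 5.4]{art1} for $\Da\geq 2$.

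The only real obstacle is a bookkeeping check that the constant and exact shape of the bound in \cite[Theorem 5.4]{art1} are as I have written, in particular that $c(\Da)\leq\Da^{2\Da^2}$. No new geometric or combinatorial input is required, since the Lipschitz covering hypothesis on $\partial S$ already supplies both the measurability of $S$ and the boundary-layer control that underlies the cited discrepancy bound.
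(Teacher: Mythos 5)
Your proposal is correct and matches what the paper does: the paper gives no explicit proof of Lemma \ref{MV_CL}, stating only that it is ``an immediate consequence of \cite[Theorem 5.4]{art1},'' and your derivation spells out exactly that deduction --- replace $\lambda_1\cdots\lambda_i$ by $\lambda_1^i$, bound the maximum of $(L/\lambda_1)^i$ over $0\leq i\leq \Da-1$ by $1+(L/\lambda_1)^{\Da-1}$, and absorb the constant $c(\Da)$ of the cited theorem into $\Da^{2\Da^2}$. The bookkeeping you flag is harmless since the constant in \cite[Theorem 5.4]{art1} is indeed comfortably below $\Da^{2\Da^2}$ for $\Da\geq 2$.
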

Next we introduce the sets
\begin{alignat*}1
H&=\{(x,y)\in \IR^2; |xy|<\F, |x|\leq T, |y|\leq T\},\\
Z&=H\times (0,Q],
\end{alignat*}
and the lattice
\begin{alignat*}1
\Lambda&=(1,0,0)\IZ+(0,1,0)\IZ+(\alpha,\beta,1)\IZ.
\end{alignat*}
Clearly,
\begin{alignat}1\label{MLZ1}
|M_{\alpha,\beta}(\F,T,Q)|=|\Lambda\cap Z|.
\end{alignat}
Instead of working with $Z$ it is more convenient to decompose $Z$ into four identically shaped parts $\Zj$
and two rectangles $\Ej$. We set
\begin{alignat*}1
\Rone&=\{(x,y)\in \IR^2; |xy|< \F,0< x\leq T,0< y\leq T\},\\
\Rtwo&=\{(x,y)\in \IR^2; |xy|< \F,-T\leq x<0,0< y\leq T\},\\
\Rthree&=\{(x,y)\in \IR^2; |xy|< \F,0< x\leq T,-T\leq y<0\},\\
\Rfour&=\{(x,y)\in \IR^2; |xy|< \F,-T\leq x<0, -T\leq y<0\}.
\end{alignat*}
Furthermore, we put  for $1\leq j\leq 4$
\begin{alignat*}1
\Zj&=\Rj\times (0,Q],\\
\Eone&=[-T,T]\times \{0\} \times(0,Q],\\
\Etwo&=\{0\}\times[-T,T] \times(0,Q],
\end{alignat*}
so that we have the following partition
\begin{alignat*}1
Z=\Zone\cup \Ztwo\cup\Zthree\cup\Zfour\cup\Eone\cup \Etwo.
\end{alignat*}
Due to the irrationality of $\alpha$ and $\beta$ we have 
\begin{alignat*}1
|\Lambda\cap \Eone|=|\Lambda \cap\Etwo|=2\lfloor T\rfloor+1<2(T+1).
\end{alignat*}
Hence,
\begin{alignat*}1
\left||\Lambda\cap Z|-\sum_{j=1}^{4}|\Lambda\cap \Zj|\right|< 4(T+1).
\end{alignat*}
Using the automorphisms defined by $\ISone(x,y,z)=(x,y,z)$,
$\IStwo(x,y,z)=(-x,y,z)$, $\ISthree(x,y,z)=(x,-y,z)$, and $\ISfour(x,y,z)=(-x,-y,z)$ we have
$\ISj \Zj=\Zone$. Setting for $1\leq j\leq 4$
\begin{alignat*}1
\Lambdaj=\ISj(\Lambda),
\end{alignat*}
we find
\begin{alignat}1\label{MLZ2}
\left||M_{\alpha,\beta}(\F,T,Q)|-\sum_{j=1}^{4}|\Lambdaj\cap \Zone|\right|< 4(T+1).
\end{alignat}
Unfortunately, our set $\Zone$ is increasingly distorted when approaching the coordinate-axes.
After the trivial decomposition of $Z$ we shall now consider a less obvious decomposition of
our new counting domain $\Zone$.
  
\section{Partitioning the counting domain}\label{part}
 
First let us decompose $\Rone$ into three disjoint pieces. Set 
\begin{alignat*}1
\A&=\{(x,y);0<y<(\F/T^2)x, 0<x\leq T\},\\
\B&=\{(x,y);(T^2/\F)x\leq y\leq T, 0<x<\F/T\},\\
S&=\{(x,y);0<(\F/T^2)x\leq y< (T^2/\F)x, xy<\F\}.
\end{alignat*}
Hence, we have
\begin{alignat}1\label{Zdecomp}
|\Lambdaj\cap \Zone|=|\Lambdaj\cap \A\times(0,Q]|+|\Lambdaj\cap \B\times(0,Q]|+|\Lambdaj\cap S\times(0,Q]|.
\end{alignat}
The sets $\A$ and $\B$ are long and thin triangles, distorted only in $x$-direction or $y$-direction respectively.
The set $S$ is more troublesome and requires a further decomposition into about $-\log(\F/T^2)$ pieces.
Recall that by hypothesis $0<\F/T^2\leq 1/e^2$.
Let $\nu\in [1/e^2,1/e]$ be maximal such that $N=\log(\F/T^2)/\log \nu$ is an integer.
Hence,
\begin{alignat}1\label{NQT}
1\leq N\leq -\log(\F/T^2).
\end{alignat}
Decompose $S$ into the $2N$ pieces $S_{-N+1},\ldots, S_{N}$, where
$$S_i=\{(x,y); 0<\nu^{i}x\leq y<\nu^{i-1}x, xy<\F\}.$$
Then we have the following partition
\begin{alignat}1\label{Sdecomp}
S=\bigcup_{-N+1\leq i\leq N}S_i.
\end{alignat}
Note that 
\begin{alignat}1\label{S0bound}
S_0\subset [0,\sqrt{\F}]\times [0,\sqrt{\F/\nu}]\subset[0,3\sqrt{\F}]^2. 
\end{alignat}
A straightforward calculation yields
\begin{alignat*}1
\Vol_2(S_0)=\frac{\sqrt{\F\nu}\sqrt{\F/\nu}}{2}+\int_{\sqrt{\nu\F}}^{\sqrt{\F}}\frac{\F}{x}dx -\frac{\F}{2}=-\frac{\F}{2}\log\nu.
\end{alignat*}
Hence,
\begin{alignat*}1
V=\Vol_3 (S_0\times (0,Q])=-\frac{\F}{2} Q\log\nu.
\end{alignat*}
Thus
\begin{alignat}1\label{Vpsi}
\frac{\F Q}{2}\leq V\leq \F Q.
\end{alignat}

\section{Applying flows}\label{flows}
In this section we construct certain elements of the diagonal flow on $\IR^3$
that transform our distorted sets into sets of small diameter. 
 
We introduce the following automorphisms of $\IR^2$
$$\gi(x,y)=(\nu^{i/2}x,\nu^{-i/2}y).$$
Then we have for $-N+1\leq i\leq N$
$$\gi S_i=S_0.$$
We extend $\gi$ to an automorphism of $\IR^3$ 
$$\Gi(x,y,z)=(\nu^{i/2}x,\nu^{-i/2}y,z),$$ so that
$$\Gi(S_i\times(0,Q])=S_0\times(0,Q].$$
Next we introduce a further automorphism of $\IR^3$ 
$$\Gt(x,y,z)=(\theta x,\theta y,\theta^{-2} z),$$
where    
$$\theta=\frac{V^{1/3}}{\sqrt{\F}}.$$
Let us write 
$$\GtGi=\Gt\circ\Gi.$$
Then we have 
\begin{alignat*}1
\GtGi(S_i\times(0,Q])=\theta S_0\times (0,\theta^{-2}Q].
\end{alignat*}
Combining (\ref{S0bound}) and (\ref{Vpsi}) we get
\begin{alignat}1\label{Superset1}
\GtGi(S_i\times(0,Q])=\theta S_0\times (0,\theta^{-2}Q]\subset [0,3\theta\sqrt{\F}]^2\times (0,\theta^{-2}Q]\subset [0,3V^{1/3}]^3.
\end{alignat}
Similarly, we find
\begin{alignat}1\label{Superset2}
\GtGN (\A\times (0,Q])&\subset [0,3V^{1/3}]^3,\\
\label{Superset3}\GtGmN (\B\times (0,Q])&\subset [0,3V^{1/3}]^3.
\end{alignat}

\begin{lemma}\label{Lip}
For $-N+1\leq i\leq N$ the boundary of $\GtGi (S_i\times (0,Q])$, $\GtGN (\A\times (0,Q])$ and $\GtGmN (\B\times (0,Q])$ lies in Lip$(3,M,L)$ 
where $\M=5$, $L=\CL V^{1/3}$ and $\CL=12$.
\end{lemma}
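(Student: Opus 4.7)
The plan is to cover the boundary of each of the three regions by $M = 5$ Lipschitz maps from $[0,1]^2$ into $\IR^3$, each with constant at most $12 V^{1/3}$.

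First I would unravel the flows. Using $\theta\sqrt{\F} = V^{1/3}$ and the explicit formulas for the diagonal flow, I compute that $\GtGi(S_i \times (0,Q]) = \theta S_0 \times (0, \theta^{-2}Q]$, while $\GtGN(\A \times (0,Q])$ and $\GtGmN(\B \times (0,Q])$ turn out to be right triangles of leg $V^{1/3}$ extruded in the $z$-direction. From (\ref{Vpsi}) I have $\theta^{-2}Q = \F Q/V^{2/3} \leq 2V^{1/3}$, so the extrusion height is bounded. Each planar region is bounded by three arcs --- three line segments in the triangle cases, and two line segments together with the hyperbolic arc $xy=\F$ for $\theta S_0$ --- so the three-dimensional boundary decomposes naturally into a top face, a bottom face, and three side surfaces extruded from the boundary arcs: five pieces in total.

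Next, for each face I would write down an explicit parametrization $\Phi:[0,1]^2 \to \IR^3$ and bound its Lipschitz constant by the Frobenius norm of the Jacobian. The affine side faces are immediate: using $\theta^{-2}Q \leq 2V^{1/3}$ together with $\nu \in [1/e^2, 1/e]$, each has Frobenius norm at most $V^{1/3}\sqrt{4 + \nu + 1/\nu} < 4V^{1/3}$. For the extruded hyperbolic arc, parametrized as $(u,v) \mapsto (\theta x(u), \theta\F/x(u), v\theta^{-2}Q)$ with $x(u)$ linear on $[\sqrt{\F\nu}, \sqrt{\F}]$, I find $|\partial y/\partial u| = \theta\F x'(u)/x(u)^2 \leq V^{1/3}/\nu \leq e^2 V^{1/3}$, so the Frobenius norm is bounded by $V^{1/3}\sqrt{5 + e^4} < 12V^{1/3}$.

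The main obstacle is parametrizing the planar region $\theta S_0$ itself (needed for the top and bottom faces), since its upper boundary switches from $y = x/\nu$ to $y = \F/x$ at $x = \theta\sqrt{\F\nu}$. My plan is to split $[0,1]^2$ at $u = \sqrt{\nu}$ and define $\Phi$ piecewise as a linear interpolation in $v$ between the lower boundary $y = x$ and whichever upper boundary applies. The two pieces agree on the convex split set $\{\sqrt{\nu}\} \times [0,1]$; since any segment between points in opposite halves crosses this set, a standard two-piece argument shows that the global Lipschitz constant equals the maximum of the two piecewise constants. A direct calculation bounds the nonzero partials of $\Phi$ by $V^{1/3}$, $V^{1/3}/\nu$, and $V^{1/3}/\sqrt{\nu}$, giving Frobenius norm at most $V^{1/3}\sqrt{1 + 1/\nu + 1/\nu^2} \leq V^{1/3}\sqrt{1 + e^2 + e^4} < 8V^{1/3}$. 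Assembling the five maps then yields the claim.
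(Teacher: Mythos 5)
Your proposal is correct and uses essentially the same decomposition as the paper: five boundary pieces (top and bottom faces, two extruded line-segment sides, and the extruded hyperbolic arc), with the hyperbolic piece controlled via the Jacobian of the same parametrization. Where you differ is on the four planar pieces: you write explicit (piecewise) parametrizations and bound their Frobenius norms, whereas the paper uses the shortcut that any subset of a plane of diameter at most $\dia$ can be covered by a single affine map from $[0,1]^2$ with Lipschitz constant $2\dia$, combined with the diameter bound $6V^{1/3}$ from (\ref{Superset1})--(\ref{Superset3}); that shortcut also dispatches the triangular prisms coming from $\A$ and $\B$, which you leave implicit.
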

\begin{proof}
The boundary of the set $\GtGi (S_i\times (0,Q])=\theta S_0\times (0,\theta^{-2}Q]$ can be covered by $4$ planes and 
the set 
$$\{(x,\theta^2\F/x,z); \theta\sqrt{\nu \F}\leq x\leq \theta\sqrt{\F}, 0\leq z\leq Q\theta^{-2}\}.$$
For the Jacobian $J$ of the parameterising map 
$$(t_1,t_2)\rightarrow (at_1+b,\frac{\theta^2\F}{(at_1+b)},ct_2)$$
with $a=\theta\sqrt{\F}(1-\sqrt{\nu})$, $b=\theta\sqrt{\nu\F}$, $c=Q/\theta^2$, and domain $[0,1]^2$ 
we get for its $l_2$-operator norm $\|J\|_2\leq 4V^{1/3}$ which yields the required Lipschitz condition thanks to
the Mean-Value Theorem.
Hence, we are left with the linear pieces of the boundary. Clearly, a subset of a plane with diameter 
no larger than $\dia$ can be parameterised by a single affine map with domain $[0,1]^2$ and Lipschitz constant $2\dia$.  
Thus, it suffices to show that the diameter of $\GtGi (S_i\times (0,Q])$ is $\leq 6 V^{1/3}$. But the latter
holds due to (\ref{Superset1}).
 
Finally, the boundary of the set $\GtGN (\A\times (0,Q])$ and of the set $\GtGmN (\B\times (0,Q])$
can each be covered by $5$ planes. Moreover, by (\ref{Superset2}) and (\ref{Superset3}) their diameter is also $\leq 6 V^{1/3}$.
This proves the lemma.
\end{proof}

\section{Controlling the orbits}\label{orbits}
Our transformations of the previous section have brought our distorted sets into nice shapes. 
Unfortunately, they transform our lattices $\Lambdaj$ in a less favourable manner.
Indeed, the corresponding orbit of $\Lambdaj$ escapes to infinity, i.e., the fist successive minimum
gets arbitrarily small. However, the rate of escape is controllable and sufficiently slow.

\begin{lemma}\label{minbound}
For $1\leq j\leq 4$, $-N+1\leq i\leq N$, and $Q\geq 1$ we have 
\begin{alignat*}1
\lambda_1(\GtGi \Lambdaj)&\geq \min\{1,1/(2T)\}\phi(Q)^{1/3}.
\end{alignat*}
\end{lemma}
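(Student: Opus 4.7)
The plan is to bound directly the Euclidean length of a generic nonzero vector of $\GtGi\Lambdaj$. Since each $\ISj$ merely changes signs of the first two coordinates and therefore preserves length, it suffices to bound
\[
A + B + C := \theta^{2}\nu^{i}(m+q\alpha)^{2}+\theta^{2}\nu^{-i}(n+q\beta)^{2}+\theta^{-4}q^{2}
\]
from below over all $(m,n,q)\in\IZ^{3}\setminus\{\vNull\}$. The key identity is $\theta^{2}\cdot\theta^{2}\cdot\theta^{-4}=1$ and $\nu^{i}\cdot\nu^{-i}=1$, which forces
\[
ABC = q^{2}(m+q\alpha)^{2}(n+q\beta)^{2}.
\]
Hence $A+B+C\geq\max\{A,B,C\}\geq (ABC)^{1/3}=\bigl(|q|\cdot|m+q\alpha|\cdot|n+q\beta|\bigr)^{2/3}$. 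I would then split into three cases according to $q$.

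First, when $0<|q|\leq Q$, the Diophantine hypothesis (\ref{multapproxcond}) combined with $|m+q\alpha|\geq\|q\alpha\|$, $|n+q\beta|\geq\|q\beta\|$, and the monotonicity of $\phi$ gives length at least $\phi(|q|)^{1/3}\geq \phi(Q)^{1/3}$. Second, when $|q|>Q$, I discard $A$ and $B$ and use only $C$: thanks to $V\leq \F Q$ and the reduction (\ref{FQphi}),
\[
C>\theta^{-4}Q^{2}=\frac{\F^{2}Q^{2}}{V^{4/3}}\geq (\F Q)^{2/3}\geq \phi(Q)^{2/3},
\]
so the length exceeds $\phi(Q)^{1/3}$.

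The third and only delicate case is $q=0$, where one of $|m|,|n|$ is at least $1$. Here I exploit the range $-N+1\leq i\leq N$: since $0<\nu<1$ and $\nu^{N}=\F/T^{2}$, we have both $\nu^{i}\geq \nu^{N}=\F/T^{2}$ and $\nu^{-i}\geq \nu^{N-1}>\F/T^{2}$. Consequently
\[
A+B\geq \theta^{2}\cdot\frac{\F}{T^{2}}\cdot(m^{2}+n^{2})\geq \frac{\theta^{2}\F}{T^{2}}=\frac{V^{2/3}}{T^{2}},
\]
so the length is at least $V^{1/3}/T\geq (\F Q/2)^{1/3}/T\geq (\phi(Q)/2)^{1/3}/T$ via $V\geq \F Q/2$ and (\ref{FQphi}). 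A short comparison finishes the proof: if $T\leq 1/2$ then $\min\{1,1/(2T)\}=1$ and $(\phi(Q)/2)^{1/3}/T\geq 2^{2/3}\phi(Q)^{1/3}\geq \phi(Q)^{1/3}$; if $T>1/2$ then $\min\{1,1/(2T)\}=1/(2T)$ and $(\phi(Q)/2)^{1/3}/T=\phi(Q)^{1/3}/(2^{1/3}T)\geq \phi(Q)^{1/3}/(2T)$. Combining the three cases yields the claimed bound.

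The main obstacle is the case $q=0$: once the lattice has been sheared, the two vectors $(1,0,0)$ and $(0,1,0)$ still live in $\GtGi\Lambdaj$ and control $\lambda_{1}$ through the $T$--dependent factor $\min\{1,1/(2T)\}$. For $q\neq 0$ the strategy of passing to the product $ABC$ is the natural one because the scaling factors conspire to cancel, after which the Diophantine hypothesis on $(\alpha,\beta)$ is available in its raw form $|q|\|q\alpha\|\|q\beta\|\geq \phi(|q|)$; the $|q|>Q$ subcase is then handled by the obvious bound $|\vec{v}|\geq\theta^{-2}|q|$, using the reduction (\ref{FQphi}) to absorb $\F Q$ into $\phi(Q)$.
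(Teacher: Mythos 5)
Your proof is correct and follows essentially the same case split as the paper ($0<|q|\le Q$ via the Diophantine hypothesis after taking a geometric-mean bound, $|q|>Q$ via the last coordinate, $q=0$ via the $\nu^{\pm i}$-scaling and the range $-N+1\le i\le N$), differing only cosmetically: you use $\max\{A,B,C\}\ge(ABC)^{1/3}$ where the paper applies AM--GM, and in the $q=0$ case you bound both $\nu^{i},\nu^{-i}\ge\nu^{N}$ instead of the paper's $\max\{\theta\nu^{i/2}|p_1|,\theta\nu^{-i/2}|p_2|\}\ge\theta\nu^{|i|/2}\ge\theta\nu^{N/2}$, arriving at the same $V^{1/3}/T$ lower bound.
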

\begin{proof}
Let $v\in \Lambdaj$ be an arbitrary non-zero lattice point. Then there exist $\epsilon_1$ and $\epsilon_2$
in $\{-1,1\}$ and $p_1,p_2,q\in \IZ$, not all zero, such that $v=(\epsilon_1(p_1+q\alpha),\epsilon_2(p_2+q\beta),q)$. 
First suppose $q\neq 0$. Then by the inequality of arithmetic and geometric means we have 
\begin{alignat*}1
|\GtGi v|^2\geq 3 (|p_1+q\alpha|\cdot|p_2+q\beta|\cdot|q|)^{2/3}.
\end{alignat*}
Using our hypothesis (\ref{multapproxcond}) we get $|p_1+q\alpha|\cdot|p_2+q\beta|\cdot|q|\geq \phi(|q|)$. 
If $|q|\leq Q$ we conclude, by the monotonicity of $\phi$, that $\phi(|q|)\geq \phi(Q)$, and hence
$$|\GtGi v|\geq \phi(Q)^{1/3}.$$
If, on the other hand, $|q|> Q$ then, looking only at the last coordinate, and using (\ref{FQphi}), we find
$$|\GtGi v|> \theta^{-2}Q\geq (\F Q)^{1/3}\geq \phi(Q)^{1/3}.$$ 
Suppose now that $q=0$. Then $p_1$ and $p_2$ are not both zero, and hence
\begin{alignat*}1
|\GtGi v|\geq \max\{\theta\nu^{i/2}|p_1|,\theta\nu^{-i/2}|p_2|\}\geq \theta\nu^{|i|/2}\geq \theta\nu^{N/2}.
\end{alignat*}
Recall that $\nu^{N/2}=\sqrt{\F}/T$, and $\theta=\V^{1/3}/\sqrt{\F}\geq 2^{-1/3}\left(Q/\sqrt{\F}\right)^{1/3}$.
Hence,
\begin{alignat*}1
\theta\nu^{N/2}\geq \frac{1}{2T}\left(\F Q\right)^{1/3}\geq \frac{1}{2T} \phi(Q)^{1/3}.
\end{alignat*}
This proves the lemma.
\end{proof}

\section{Proof of Theorem \ref{thmasympvol}}\label{proof}
Let $1\leq j\leq 4$. Decomposing the set $\Zone$ using (\ref{Zdecomp}) and (\ref{Sdecomp}) and then applying the automorphisms $\varphi_i$ yields 
\begin{alignat*}3
|\Lambdaj\cap \Zone|=& &&|\Lambdaj\cap \A\times(0,Q]|+|\Lambdaj\cap \B\times(0,Q]|
+\sum_{i=-N+1}^N|\Lambdaj\cap S_i\times(0,Q]|\\
=& &&|\GtGN\Lambdaj\cap \GtGN(\A\times(0,Q])|\\
&+&&|\GtGmN\Lambdaj\cap \GtGmN(\B\times(0,Q])|\\
&+&&\sum_{i=-N+1}^N|\GtGi\Lambdaj\cap \GtGi(S_i\times(0,Q])|.
\end{alignat*}
Note that $\det\GtGi\Lambdaj=1$. Applying Lemma \ref{MV_CL} to each summand, using Lemma \ref{Lip},  and collecting the main terms and the error terms terms yields
\begin{alignat}3\label{estimate1}
\left||\Lambdaj\cap \Zone|-\Vol_3(\Zone)\right|
\leq2\D3\M\CL^2\sum_{i=-N+1}^N\left(1+\frac{\V^{2/3}}{\lambda_1(\GtGi \Lambdaj)^2}\right).
\end{alignat}
Then, applying Lemma \ref{minbound}, we see that the right hand-side of (\ref{estimate1}) is bounded by
\begin{alignat}3
\nonumber\leq 4\D3\M\CL^2\max\{1,2T\}^2N\left(1+\frac{\V^{2/3}}{\phi(Q)^{2/3}}\right). 
\end{alignat}
Using  that by (\ref{Vpsi}) $\V\leq \F Q$, and then again that $\F Q\geq \phi(Q)$ we conclude that the latter is bounded by
\begin{alignat}3
\nonumber&\leq 8\D3\M\CL^2(1+2T)^2N\left(\frac{\F Q}{\phi(Q)}\right)^{2/3}. 
\end{alignat}
Putting $\Cim=8\D3\M\CL^2=8\cdot 3^{18}\cdot 5\cdot 12^2$, and recalling that $N\leq \log(T^2/\F)$ we conclude that 
\begin{alignat*}1
\left||\Lambdaj\cap \Zone|-\Vol_3(\Zone)\right|&\leq \Cim(1+2T)^2\log\left(\frac{T^2}{\F}\right)\left(\frac{\F Q}{\phi(Q)}\right)^{2/3}.
\end{alignat*}
By virtue of inequality (\ref{MLZ2}), we get
\begin{alignat*}3
\left||M_{\alpha,\beta}(\F,T,Q)|-4\Vol_3(\Zone)\right|\leq &5\Cim(1+2T)^2\log\left(\frac{T^2}{\F}\right)\left(\frac{\F Q}{\phi(Q)}\right)^{2/3}.
\end{alignat*}
Finally, we note that $5\Cim<3^{28}=\Cone$ and
$$\Vol_3(\Zone)=\F Q\left(\log\left(\frac{T^2}{\F}\right)+1\right),$$
and this completes the proof of Theorem \ref{thmasympvol}.

\section{Proof of Corollary \ref{sum}}
We have
\begin{alignat*}3
\sum_{q=1}^{\lfloor Q\rfloor}(\|q\alpha\|\cdot\|q\beta\|)^{-1}&\leq \sum_{k=1}^{\infty} 2^{k+1}|\{q; 1\leq q\leq Q, 2^{-k-1}\leq \|q\alpha\|\cdot\|q\beta\|<2^{-k}\}|\\
&\leq \sum_{k=1}^{\infty} 2^{k+1}|\{q; 1\leq q\leq Q,  \|q\alpha\|\cdot\|q\beta\|<2^{-k}\}|\\
&= \sum_{k=1}^{\infty} 2^{k+1}|M_{\alpha,\beta}(2^{-k},1/2,Q)|.
\end{alignat*}
Moreover, in the proof of Lemma \ref{FQsmall} we have seen that $M_{\alpha,\beta}(\F,T,Q)=\emptyset$
when $\F<\phi(Q)/Q$. We apply this with $\F=2^{-k}$ and $T=1/2$.
%Suppose $1\leq q\leq Q$. As we have $q\|q\alpha\|\|q\beta\|\geq \phi(q)$ for all positive integers $q$, and since $\phi$ is monotone and non-increasing, we %conclude that $\|q\alpha\|\|q\beta\|\geq \phi(Q)/Q$. Hence,
Hence,
\begin{alignat}3
\nonumber\sum_{k=1}^{\infty} 2^{k+1}|M_{\alpha,\beta}(2^{-k},1/2,Q)|&= \sum_{k=1}^{\lfloor\log_2(Q/\phi(Q))\rfloor} 2^{k+1}|M_{\alpha,\beta}(2^{-k},1/2,Q)|\\
\label{estapplied}&<4\cdot2^5Q+ \sum_{k=5}^{\lfloor\log_2(Q/\phi(Q))\rfloor} 2^{k+1}|M_{\alpha,\beta}(2^{-k},1/2,Q)|.
\end{alignat}
From Corollary \ref{corasympvol} we get for integers $k\geq 5$
\begin{alignat}3\label{corollaryapplied}
%&|\{q\in \IZ; ||q\alpha|| ||q\beta||<2^{-k}, 0<q<Q\}|\\
|M_{\alpha,\beta}(2^{-k},1/2,Q)|\leq &4(\log2)Qk2^{-k}+\Cfour(\log2)k\left(\frac{2^{-k}Q}{\phi(Q)}\right)^{2/3}.
\end{alignat}
Combining (\ref{estapplied}) and (\ref{corollaryapplied}) Corollary \ref{sum} follows from a straightforward  calculation
using the trivial estimates  $\sum_{k=1}^{K}k\leq K^2$ and $\sum_{k=1}^{K}kx^k\leq Kx^{K+1}/(x-1)$ (where $x>1$) and that
$\phi(Q)\leq 1/4$.

\section*{acknowledgements}
This article was initiated during a visit at the University of York, and, in parts, motivated by a question of Sanju Velani.
I am very grateful to Victor Beresnevich, Alan Haynes and Sanju Velani for many interesting and stimulating discussions
and their encouragement. I also would like to thank Th\'ai Ho\`ang L\^{e} for fruitful discussions, drawing my attention to \cite{BadziahinVelani2011}, and for pointing out an error  in an early draft of the manuscript.\\

\bibliographystyle{amsplain}
\bibliography{literature}

\providecommand{\bysame}{\leavevmode\hbox to3em{\hrulefill}\thinspace}
\providecommand{\MR}{\relax\ifhmode\unskip\space\fi MR }
% \MRhref is called by the amsart/book/proc definition of \MR.
\providecommand{\MRhref}[2]{%
  \href{http://www.ams.org/mathscinet-getitem?mr=#1}{#2}
}
\providecommand{\href}[2]{#2}
\begin{thebibliography}{1}

\bibitem{Badziahin2013}
D.~Badziahin, \emph{On multiplicatively badly approximable numbers},
  {Mathematika} \textbf{59, no.1} (2013), 31--55.

\bibitem{BadziahinVelani2011}
D.~A. Badziahin and S.~Velani, \emph{Multiplicatively badly approximable
  numbers and generalised cantor sets}, {Adv. Math.} \textbf{228, no.5} (2011),
  2766--2796.

\bibitem{BugeaudMoshchevitin2011}
Y.~Bugeaud and N.~Moshchevitin, \emph{Badly approximable numbers and
  {L}ittlewood-type problems}, {Math. Proc. Cambridge Phil. Soc.} \textbf{150}
  (2011), 215--226.

\bibitem{EinsiedlerKatokLindenstrauss}
M.~Einsiedler, A.~Katok, and E.~Lindenstrauss, \emph{Invariant measures and the
  set of exceptions to {L}ittlewood's conjecture}, {Ann. of Math. (2)}
  \textbf{164} (2006), 513--560.

\bibitem{Gallagher1962}
P.~Gallagher, \emph{Metric simultaneous diophantine aproximations}, {J. London
  Math. Soc.} \textbf{37} (1962), 387--390.

\bibitem{LeVaaler2015}
T.~H. L\^{e} and J.~D. Vaaler, \emph{Sums of products of fractional parts}, {to
  appear in Proc. London Math. Soc.}

\bibitem{art1}
M.~Widmer, \emph{Counting primitive points of bounded height}, Trans. Amer.
  Math. Soc. \textbf{362} (2010), 4793--4829.

\end{thebibliography}

\end{document}